\DeclareMathAlphabet{\mathpzc}{OT1}{pzc}{m}{it}
\newcommand{\subfiguretitle}[1]{{\scriptsize{#1}} \\[1mm] }
\newcommand{\R}{\mathbb{R}}
\newcommand{\C}{\mathbb{C}}
\providecommand{\abs}[1]{\left\lvert #1 \right\rvert}
\newcommand\xqed[1]{\leavevmode\unskip\penalty9999 \hbox{}\nobreak\hfill \quad\hbox{#1}}
\newcommand{\exampleSymbol}{\xqed{$\triangle$}}
\DeclareMathOperator{\trace}{trace}
\DeclareMathOperator{\rank}{rank}
\DeclareMathOperator{\Aut}{Aut}
\newtheorem{theorem}{Theorem}[section]
\newtheorem{corollary}[theorem]{Corollary}
\newtheorem{lemma}[theorem]{Lemma}
\newtheorem{proposition}[theorem]{Proposition}
\newtheorem{definition}[theorem]{Definition}
\newtheorem{remark}[theorem]{Remark}
\newtheorem{remarks}[theorem]{Remarks}
\newtheorem{example}[theorem]{Example}
\renewcommand*\env@matrix[1][*\c@MaxMatrixCols c]{%
  \hskip -\arraycolsep
  \let\@ifnextchar\new@ifnextchar
  \array{#1}}
\begin{document}

\title{Self-adjoint Matrices are Equivariant}
\author[*]{Michael Dellnitz}
\affil[*]{\normalsize Department of Mathematics, Paderborn University, D-33095 Paderborn, Germany}

\maketitle

\begin{abstract}
In this short note we prove that a matrix $A\in\R^{n,n}$ is self-adjoint if and only if
it is equivariant with respect to the action of a group $\Gamma\subset {\bf O}(n)$
which is isomorphic to $\otimes_{k=1}^n\mathbf{Z}_2$. Moreover we discuss
potential applications of this result, and we use it in particular for the approximation of
higher order derivatives for smooth real valued functions of several variables.
\end{abstract}

{\em Key words:} self-adjoint matrix, equivariance, symmetry, Taylor expansion

{\em AMS subject classifications.}  15B57, 15A24, 37G40, 41A58

\pagestyle{myheadings}
\thispagestyle{plain}


\section{Introduction}

Within this short note we prove a characterization for a matrix being \emph{symmetric} --
in the sense of $A = A^T$ -- by using the notion of \emph{equivariance}.
The proof of this fact is not difficult at all, but to the best of the knowledge
of the author so far the related result cannot explicitly be found in the literature.

However, in several articles concerning the development of dynamical
systems for the solution of certain optimization problems this underlying equivariance
structure is implicitly present (e.g.\ \cite{Scho68, Bro88, Bro89}), and one would expect that
this is also the case in other applications. The point
of this note is to state this characterization of $A = A^T$ explicitly, and this is done in Section~\ref{sec:mr}.
In Section~\ref{sec:impli} we discuss potential applications in equivariant
bifurcation theory, and we illustrate concretely how this result can be used for the
construction of simple 
approximations of derivatives of higher order for real valued functions.

\section{Main Result}\label{sec:mr}

Let $\Sigma \subset {\bf O}(n)$ be the abelian group consisting of the
$2^n$ matrices
\[
    \begin{pmatrix}
        \pm 1   & 0 & 0 & \cdots   & 0 \\
        0 & \pm 1   & 0 &\cdots & 0   \\
        \vdots  & \vdots & \vdots & \cdots  & \vdots \\
        0 & \cdots   & 0 & 0 & \pm 1 
    \end{pmatrix}.
\]
Obviously for any diagonal matrix 
\[
D = 
    \begin{pmatrix}
        \lambda_1   & 0 & 0 & \cdots   & 0 \\
        0 & \lambda_2   & 0 &\cdots & 0   \\
        \vdots  & \vdots & \vdots & \cdots  & \vdots \\
        0 & \cdots   & 0 & 0 & \lambda_n 
    \end{pmatrix},\quad \lambda_j \in \R, \quad j=1,2,\ldots,n,
\]
we have
\begin{equation*}
\sigma D = D \sigma \quad \forall \sigma \in \Sigma.
\end{equation*}
In fact, it is easy to verify that for an arbitrary matrix $B\in\R^{n,n}$ one has
\begin{equation}
\label{eq:DSigma}
\sigma B = B \sigma \quad \forall \sigma \in \Sigma \quad \Longleftrightarrow \quad
\mbox{$B$ is a diagonal matrix.}
\end{equation}

In this note we prove the following characterization:

\begin{proposition}
\label{prop:main}
A matrix $A\in \R^{n,n}$ is self-adjoint (i.e. $A = A^T$) if and only if there is
an orthogonal matrix $V\in {\bf O}(n)$ such that
\begin{equation}
\label{eq:equiv}
\gamma A = A \gamma\quad \forall \gamma \in \Gamma,
\end{equation}
where the group $\Gamma \subset {\bf O}(n)$ is defined by
\[
\Gamma = \{ V^T \sigma V : \sigma \in \Sigma \}.
\]
\end{proposition}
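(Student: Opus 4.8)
The plan is to reduce the equivariance condition \eqref{eq:equiv} to an orthogonal-diagonalizability statement and then invoke the spectral theorem. Since conjugation $\sigma \mapsto V^T\sigma V$ by the fixed orthogonal matrix $V$ is a bijection from $\Sigma$ onto $\Gamma$, the condition $\gamma A = A\gamma$ for all $\gamma\in\Gamma$ is equivalent to $V^T\sigma V\,A = A\,V^T\sigma V$ for all $\sigma\in\Sigma$. Multiplying on the left by $V$ and on the right by $V^T$ and using $VV^T = V^TV = I$, this is equivalent to $\sigma\,(VAV^T) = (VAV^T)\,\sigma$ for all $\sigma\in\Sigma$. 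Applying the elementary equivalence \eqref{eq:DSigma} with $B = VAV^T$, I would conclude that \eqref{eq:equiv} holds for a given $V\in{\bf O}(n)$ if and only if $VAV^T$ is a diagonal matrix.

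With that reduction in hand, the proposition becomes: $A = A^T$ if and only if there exists $V\in{\bf O}(n)$ with $VAV^T$ diagonal. The "if" direction is then immediate: if $VAV^T = D$ is diagonal, then $A = V^TDV$, and since $D^T = D$ and $V^TV = I$ we get $A^T = V^TD^TV = V^TDV = A$. For the "only if" direction I would invoke the real spectral theorem: a real symmetric matrix possesses an orthonormal basis of eigenvectors, so there is $V\in{\bf O}(n)$ with $VAV^T = D$ diagonal (the diagonal entries being the eigenvalues of $A$). Running the reduction backwards — $\sigma D = D\sigma$ for all $\sigma\in\Sigma$ by \eqref{eq:DSigma}, hence $V^T\sigma V\,A = V^T\sigma V\,V^TDV = V^T\sigma DV = V^TD\sigma V = V^TDV\,V^T\sigma V = A\,V^T\sigma V$ — then establishes \eqref{eq:equiv} for every $\gamma = V^T\sigma V\in\Gamma$.

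The only non-routine ingredient is the spectral theorem used in the forward direction; everything else is bookkeeping with the identities $VV^T = V^TV = I$, $D^T = D$, and the already-established equivalence \eqref{eq:DSigma}. I do not expect a genuine obstacle here: the main point requiring care is to record the isomorphism $\Sigma\cong\Gamma$ accurately and to track on which side the conjugation acts, so that the two displayed chains of equalities above are valid. One could, if a fully self-contained note were desired, replace the citation of the spectral theorem by a direct proof of orthogonal diagonalizability, but since that is classical I would simply cite it.
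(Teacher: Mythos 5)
Your proposal is correct and follows essentially the same route as the paper: both directions hinge on conjugating the equivariance condition by $V$ to reduce it, via \eqref{eq:DSigma}, to the statement that $VAV^T$ is diagonal, with the spectral theorem supplying the forward direction and the transpose computation $A^T=(V^TDV)^T=V^TDV=A$ supplying the converse. Your explicit observation that \eqref{eq:equiv} holds for a given $V$ \emph{if and only if} $VAV^T$ is diagonal is a slightly cleaner packaging of what the paper does in two separate passes, but the substance is identical.
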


\begin{proof}
Suppose that $A = A^T$. Then there is
$V\in {\bf O}(n)$ such that
\[
D = V A V^T
\]
is a diagonal matrix.
By \eqref{eq:DSigma}
we have for all $\sigma \in \Sigma$
\begin{equation*}
\sigma VAV^T = VAV^T \sigma \quad \Longleftrightarrow
\quad V ^T\sigma VA = AV^T \sigma V.
\end{equation*}
Therefore $A$ satisfies the equivariance condition \eqref{eq:equiv}.

Now suppose that  \eqref{eq:equiv} is satisfied for some $V\in {\bf O}(n)$.
Then the matrix $V A V^T$ commutes with every $\sigma\in\Sigma$, and 
by \eqref{eq:DSigma}
it follows that $D=VAV^T$ is a diagonal matrix. Therefore
\[
A^T = (V^T D V)^T = A
\]
as desired.
\end{proof}

\begin{remarks}
\begin{itemize}
\item[(a)] Observe that the implication "$\Longrightarrow$" could also be proved by
using the well know fact that two matrices $A$ and $B$ commute if 
there is an orthogonal transformation $V$ such that both $V^T A V$ and
$V^T B V$ are diagonal.
\item[(b)] By construction all the eigenvalues of every $\gamma \in \Gamma$ are $1$ or $-1$.
In particular $\gamma ^2 = I$ for all $\gamma \in \Gamma$.
Moreover, by (a) the matrix $A$ and all $\gamma\in \Gamma$ possess the same set of eigenvectors.
\item[(c)] Obviously analogous results can be obtained for Hermitian or normal matrices:
Using essentially the same proof as in Proposition~\ref{prop:main} one can show that
a matrix $A\in \C^{n,n}$ is normal (i.e. $AA^* = A^* A$) if and only if there is
a unitary matrix $W\in {\bf U}(n)$ such that
\[
\gamma A = A \gamma\quad \forall \gamma \in \Gamma,
\]
where the group $\Gamma \subset {\bf U}(n)$ is defined by
\[
\Gamma = \{ W^* \sigma W : \sigma \in \Sigma \}.
\]
\end{itemize}
\end{remarks}

\section{On Applications}\label{sec:impli}

Proposition~\ref{prop:main} could be used to look at results for
symmetric matrices in the light of the equivariance condition \eqref{eq:equiv}.
For instance a result from \cite{GSS88} on the genericity of the structure
of eigenspaces would imply the well known fact that generically eigenspaces of self-adjoint
matrices are one-dimensional. (Simply observe that $\Gamma \cong \otimes_{k=1}^n\mathbf{Z}_2 $
possesses only one-dimensional (absolutely) irreducible representations.)

A potentially more interesting
application may be the analysis of symmetry breaking bifurcations
for gradient systems since in this case the Jacobian would be equivariant
according to \eqref{eq:equiv}. This could particularly be useful for
bifurcation problems where the (symmetric) steady state solution does not depend
on the bifurcation parameter. In fact, some time ago the author himself has
co-authored an article on "equivariant (and) self-adjoint matrices" \cite{DeMel94},
and it could be interesting to reconsider these results by taking 
the insight provided by Proposition~\ref{prop:main} into account.

However, within this note let us focus concretely on one
implication involving Taylor expansions. In this context the following
immediate consequence
of Proposition~\ref{prop:main} strongly indicates that the result
could, for instance, be used to develop a novel general approach for the
construction of higher order stencils for real valued functions
of several variables.

Suppose that $f:\R^n \to \R$ is smooth in a neighborhood of
$\bar x\in\R^n$. In the following we use Proposition~\ref{prop:main} to
construct a four-point-stencil which provides a second order approximation of
evaluations of the fourth order derivative in $\bar x$. For convenience
we write the Taylor expansion of $f$ in $\bar x$ as
\[
f(\bar x+h) = f(\bar x) + \nabla f(\bar x)^T h + \frac{1}{2} h^T H(\bar x) h + \sum_{j=3}^\infty g_j(\bar x,h),
\]
where $g_j(\bar x,h) = O(\| h\|^j)$, $j=3,4,\ldots$, and $H(\bar x)$ is the Hessian matrix of $f$ at $\bar x$.
\begin{corollary}
Denote by $\Gamma(\bar x)$ the group in Proposition~\ref{prop:main}
corresponding to the Hessian matrix $H(\bar x)$.
Then for all $\gamma \in\Gamma(\bar x)$ we have
\begin{equation}
\label{eq:Taylor1}
f(\bar x+\gamma h) -2f(\bar x)+f(\bar x-\gamma h) = h^T H(\bar x) h + 2 g_4(\bar x,\gamma h) + O(\| h\|^6),
\end{equation}
and therefore for all $\gamma_1, \gamma_2\in\Gamma(\bar x)$
\begin{equation}\label{eq:Taylor2}
\begin{array}{ll}
& f(\bar x+\gamma_1 h) + f(\bar x-\gamma_1 h) -  f(\bar x+\gamma_2 h) - f(\bar x-\gamma_2 h) = \\
= & 2(g_4(\bar x,\gamma_1 h) - g_4(\bar x,\gamma_2 h)) + O(\| h\|^6).
\end{array}
\end{equation}
In particular, $f(\bar x+\gamma_1 h) + f(\bar x-\gamma_1 h) -  f(\bar x+\gamma_2 h) - f(\bar x-\gamma_2 h) = O(\| h\|^4)$.
\end{corollary}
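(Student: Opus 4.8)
The plan is to substitute $h\mapsto\gamma h$ (and $h\mapsto-\gamma h$) into the Taylor expansion displayed above and add the two resulting identities. Each $g_j(\bar x,\cdot)$ is the homogeneous degree-$j$ term of the expansion, so it has the parity $g_j(\bar x,-v)=(-1)^j g_j(\bar x,v)$; consequently in $f(\bar x+\gamma h)+f(\bar x-\gamma h)$ the gradient term and every odd-order $g_j$ cancel, while the even-order ones double. Writing the expansion with an explicit degree-five term and a remainder of order $\ge 6$ before adding (so that the parity cancellation is applied to exact homogeneous terms rather than to a Lagrange remainder), and using $\norm{\gamma h}=\norm{h}$ to bound the part of order $\ge 6$ by $O(\norm{h}^6)$, one is left with
\[
f(\bar x+\gamma h)-2f(\bar x)+f(\bar x-\gamma h)=(\gamma h)^T H(\bar x)(\gamma h)+2g_4(\bar x,\gamma h)+O(\norm{h}^6).
\]

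The only substantive step is the simplification $(\gamma h)^T H(\bar x)(\gamma h)=h^T H(\bar x)h$. For this I would use three properties of an element $\gamma\in\Gamma(\bar x)$: it is symmetric (being an orthogonal involution, $\gamma^T=\gamma^{-1}=\gamma$; cf.\ Remark~(b)); it satisfies $\gamma^2=I$; and --- the point where the hypothesis really enters --- it commutes with $H(\bar x)$ by Proposition~\ref{prop:main}, since $\Gamma(\bar x)$ is by definition the group attached to the symmetric matrix $H(\bar x)$. Then $(\gamma h)^T H(\bar x)(\gamma h)=h^T\gamma H(\bar x)\gamma h=h^T H(\bar x)\gamma^2 h=h^T H(\bar x)h$, which yields \eqref{eq:Taylor1}.

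To obtain \eqref{eq:Taylor2} I would write \eqref{eq:Taylor1} once with $\gamma_1$ and once with $\gamma_2$ and subtract: the terms $-2f(\bar x)$ and $h^T H(\bar x)h$ are identical in both lines and drop out, leaving $2(g_4(\bar x,\gamma_1 h)-g_4(\bar x,\gamma_2 h))+O(\norm{h}^6)$. Finally, since $g_4(\bar x,\cdot)$ is homogeneous of degree $4$ and each $\gamma_i$ is orthogonal, $g_4(\bar x,\gamma_i h)=O(\norm{h}^4)$, so the right-hand side of \eqref{eq:Taylor2} is $O(\norm{h}^4)$ and the concluding assertion follows. There is no genuine obstacle here beyond this routine bookkeeping; the entire content of the corollary is carried by the commutation relation $\gamma H(\bar x)=H(\bar x)\gamma$ supplied by Proposition~\ref{prop:main}, so the main care is simply to keep the homogeneous Taylor terms and the remainder separate so that the parity argument is legitimate.
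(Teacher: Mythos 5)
Your proposal is correct and follows essentially the same route as the paper: expand $f(\bar x\pm\gamma h)$, cancel the odd-order terms by parity, and reduce $(\gamma h)^T H(\bar x)(\gamma h)$ to $h^T H(\bar x) h$ via the commutation relation from Proposition~\ref{prop:main} together with orthogonality of $\gamma$. Your explicit truncation of the Taylor series before invoking parity is in fact slightly more careful than the paper's ``$+\cdots$'' notation, but it is the same argument.
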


\begin{proof}
For $h\in\R^n$ and $\gamma_j \in\Gamma(\bar x)$ $(j=1,2)$ we compute
using \eqref{eq:equiv} and the fact that $\Gamma(\bar x) \subset {\bf O}(n)$
\[
f(\bar x\pm \gamma_j h) = 
f(\bar x) \pm \nabla f(\bar x)^T \gamma_j h + \frac{1}{2} h^T H(\bar x) h \pm g_3(\bar x,\gamma_j h) + g_4(\bar x,\gamma_j h) \pm g_5(\bar x,\gamma_j h) + \cdots
\]
Therefore
\begin{eqnarray*}
f(\bar x+\gamma_1 h) +f(\bar x-\gamma_1 h) & = & 2 \left( f(\bar x)  + \frac{1}{2} h^T H(\bar x) h + g_4(\bar x,\gamma_1 h) + O(\| h\|^6)\right) \\
f(\bar x+\gamma_2 h) +f(\bar x-\gamma_2 h) & = & 2 \left( f(\bar x)  + \frac{1}{2} h^T H(\bar x) h + g_4(\bar x,\gamma_2 h) + O(\| h\|^6)\right), 
\end{eqnarray*}
and \eqref{eq:Taylor1}, \eqref{eq:Taylor2} immediately follow.
\end{proof}

Obviously, if $\gamma_1 = \pm \gamma_2$ then
this result is not useful. However, for all other
choices of $\gamma_j$ this leads to interesting approximations of the fourth order derivative
as long as $h$ is not an eigenvector of $\gamma_j$ ($j=1,2$).

\begin{example}
Let $f:\R^3 \to \R$ be defined by
\[
f(x_1,x_2,x_3) = x_1 x_2 x_3^2 +x_1^2 - 3x_2^2 + x_2 \sin(x_1) - x_2^2 x_3^2.
\]
We choose $\bar x = (1,1,1)^T$ and compute
\[
H(\bar x) = \begin{pmatrix}
        2- \sin(1) & 1 + \cos(1) & 2 \\ 
        1+\cos(1) & -8 & -2 \\
        2 & -2 & 0 
    \end{pmatrix}.
\]
The choice of
\[
\sigma_1 = I\quad \mbox{and}\quad
\sigma_2 = 
    \begin{pmatrix}
        -1  & 0 & 0\\
        0  & 1 & 0 \\
        0 &  0 & 1 
    \end{pmatrix}
\]
leads to
\[
\gamma_1 = I\quad \mbox{and}\quad
\gamma_2 = 
    \begin{pmatrix}
        0.9225 & 0.3723 & 0.1015\\
        0.3723  & -0.7896 & -0.4877 \\
        0.1015 &  -0.4877 & 0.8671 
    \end{pmatrix}.
\]
For $h=(0.2,0.05,0.1)^T$ we obtain
\[
f(\bar x+h) + f(\bar x-h) -  f(\bar x+\gamma_2 h) - f(\bar x-\gamma_2 h) \approx 6.40\cdot 10^{-5},
\]
and for $h=\frac{1}{10}(0.2,0.05,0.1)^T$ one computes
\[
f(\bar x+h) + f(\bar x-h) -  f(\bar x+\gamma_2 h) - f(\bar x-\gamma_2 h) \approx 6.38\cdot 10^{-9}
\]
as expected.
\end{example}

\bibliographystyle{unsrt}
\bibliography{SAEQ}

\begin{thebibliography}{1}

\bibitem{Scho68}
P.~Sch{\"o}nemann.
\newblock On two-sided orthogonal {P}rocrustes problems.
\newblock {\em Psychometrika}, 33(1):19--33, 1968.

\bibitem{Bro88}
R.W. Brockett.
\newblock Dynamical systems that sort lists, diagonalize matrices and solve
  linear programming problems.
\newblock In {\em Decision and Control, 1988., Proceedings of the 27th IEEE
  Conference on}, pages 799--803. IEEE, 1988.

\bibitem{Bro89}
R.W. Brockett.
\newblock Least squares matching problems.
\newblock {\em Linear Algebra and its Applications}, 122:761--777, 1989.

\bibitem{GSS88}
M.~Golubitsky, I.~Stewart, and D.~Schaeffer.
\newblock {\em Singularities and Groups in Bifurcation Theory}.
\newblock Springer, 1988.

\bibitem{DeMel94}
M.~Dellnitz and I.~Melbourne.
\newblock Generic movement of eigenvalues for equivariant self-adjoint
  matrices.
\newblock {\em Journal of Computational and Applied Mathematics},
  55(3):249--259, 1994.

\end{thebibliography}

\end{document}